\newcommand{\bea}{\begin{eqnarray}}
\newcommand{\eea}{\end{eqnarray}}
\newcommand{\be}{\begin{equation}}
\newcommand{\ee}{\end{equation}}
\newtheorem{proposition}{Proposition}[section]
\newenvironment{proof}[1][Proof]{\textbf{#1.} }{\rule{0.5em}{0.5em}}
\def\theequation{\arabic{section}.\arabic{equation}}\makeatother
\title{\bf Matrix Invariants of Finite Metric Spaces}
\author{Ayse Humeyra Bilge (ayse.bilge@khas.edu.tr),\\ Metehan Incegul (metehanincegul@gmail.com)}
\date{}
\begin{document}
	\maketitle


 \begin{abstract}
Finite metric spaces are characterized by a polyhedral cone defined in terms of the positivity of the distance functions and the triangle inequalities. Their classification is based on the decomposition of an associated polyhedral cone, called the ``metric fan".  The complete classification of $n$-point  metric spaces is available only for $n\le 6$.  As the number of classes increases rapidly with the number of elements, it is desirable to have coarser equivalence class decompositions based on certain invariants of finite metric spaces.   If $(X,d)$ is  a finite metric space with elements $P_i$ and  with distance functions $d_{ij}$, the Gromov product at $P_i$ is defined as $\Delta_{ijk}=1/2(d_{ij}+d_{ik}-d_{jk})$.  Assuming that the set of Gromov product at $P_i$ has a unique smallest element $\Delta_{ijk}$, the association of the edge $P_jP_k$ to $P_i$ defines the ``Gromov product structure".   The ``pendant-free" reduction of the  finite metric space is the graph obtained by removing the edges $P_jP_k$ corresponding to the minimal Gromov products $\Delta_{ijk}$ at $P_i$.
In the present work, we define a matrix representation for a  Gromov product structure $S$ on an
$n$-point metric space, by $n\times n$ matrix  $G_S$.
We prove that if two metric spaces  have Gromov product structures that can be mapped to each other by a permutation of the indices, then their  matrices are similar via the corresponding permutation matrix.
Matrix invariants of $G_S$ are  used to define subclasses of Gromov product structures and their application to $n=5$ and $n=6$-point spaces  are given.
  \end{abstract}


\ Finite metric spaces,\ Gromov product decomposition, Matrix invariants. 

\section{Introduction}

Let $(X,d)$ be a finite metric space with $n$ elements $P_i$, $i=1,\dots,n \ (n \geq 3)$ and let $d_{ij}$ be the distance between $P_i$ and $P_j$.
A metric space with $n$ points is characterized by its set of distance functions $d_{ij}$ subject to the conditions
$d_{ij}=d_{ji}$, $d_{ij}\ne0$ for $i\ne j$ and the triangle inequalities
$d_{ij}+d_{ik}-d_{jk}\ge 0$.
If none of the triangle equalities are saturated,  then, the set
$$\{d_{ij}\in R^{n(n-1)/2}\ | \ d_{ij}>0,\quad d_{ij}+d_{ik}-d_{jk}>0 \}$$
is the interior of a convex polyhedral cone in $R^{{n(n-1)/2}}$.
The decomposition of an associated polyhedral cone called the ``metric fan" is used to classify finite metric spaces \cite{Sturm}.
The number of metrics obtained via this classification  increases rapidly with $n$; there are  $1$, $3$ and $339$ classes for $n=4$, $n=5$ and $n=6$, respectively \cite{Koolen}, \cite{Sturm}.

In previous work we have defined  ``Gromov product structures" as an alternative approach to the study of finite metric spaces \cite{BCK2017}. It was shown that, for $n=4$ and $n=5$,  the classification by the equivalence of Gromov product structures coincides with the classification given by the decomposition of the metric fan, but for $n\ge 6$ it is strictly coarser.  In fact, for $n=6$, $n=7$  and $n=8$ we obtained
$26$, $431$ and $11470$ classes; the Gromov product structures for $6$-point spaces are given in \cite{BCK2017}, the ones for $n=7$ and $n=8$ are yet unpublished.    
In the present work, we define a matrix representation for Gromov product structures and define certain matrix invariants that would allow a coarser classification of finite metric spaces.
We start by giving basic definitions.

The quantity $\Delta_{ijk}$ defined by
$$\Delta_{ijk}={\textstyle \frac{1}{2}}(d_{ij}+d_{ik}-d_{jk})\eqno(1)$$
is called the ``excess" or the Gromov product of the triangle $(P_i,P_j,P_k)$ at the vertex $P_i$ \cite{VJ}. In \cite{BCK2017},  a metric space is called
$\Delta$-generic, if the set of all Gromov products at a fixed vertex $P_i$
has a unique smallest element (for $i=1,\dots,n$).    The ``Gromov Product Structure" is then defined  as the collection of minimal Gromov products at each $P_i$, up to a permutation of the $P_i$'s.
Two $\Delta$-generic finite metric spaces   are said to be  $\Delta$-equivalent, if  the collection of minimal Gromov products at
each $P_i$ is the same, up to a permutation of the $P_i$'s.
An algorithm for determining generic Gromov product structures and finding
their equivalence classes is presented in \cite{IB_algorithm}.

The Gromov product structure on an $n$-point space, leads to a selection of a subgraph of the complete graph $K_n$ as follows. If the Gromov product $\Delta_{ijk}$ is minimal at $P_i$, we may assume that this minimal value is zero, i.e, $d_{jk}=d_{ij}+d_{ik}$. Thus the distance between  $P_j$ and $P_k$ is realized by the path consisting of the edges $P_jP_i$ and $P_iP_k$ and the edge $P_jP_k$ can be removed.  The graph obtained by removing the edges $P_jP_k$ corresponding to the minimal Gromov products $\Delta_{ijk}$ at $P_i$ is called the ``pendant-free reduction" of the complete graph.
The Gromov product structure determines the pendant-free reduction but contains more information: For the   Gromov product types $I_1$ and $I_2$ of Table 2, the set of removed edges, $\{P_1P_3, P_2P_4,  P_5P_6\}$, form a completely disconnected graph, thus their pendant-free reductions are equivalent as graphs, although their Gromov product structures are inequivalent.

In the present work, in Section 2,  we define the matrix representation of a Gromov product structure as an $n\times  n$  matrix and prove that equivalent $\Delta$-structures give rise to matrices that are similar via a permutation matrix.
In Section 3,   we describe the matrix  invariants and illustrate the method for  $6$-point spaces in Section 4.  We present our concluding remarks and possible applications in Section 5.

\section{Matrix representation of a Gromov product structure}
\def\theequation{2.\arabic{equation}}\makeatother
\setcounter{equation}{0}

We recall that the decomposition of finite metric spaces is defined via the
decomposition of a polyhedral cone called the ``metric fan" \cite{Sturm}.
This classification is defined as follows.  Let $(X_n,d)$ be a finite metric space and let $K_n$ be the complete graph with $n$ vertices $V_i$.
A subgraph $G$ of $K_n$ is said to be ``a cell for the metric $d$",  if there are (positive) numbers $\{x_1,\dots,x_n\}$, such that
$d_{ij}<x_i+x_j$ and $d_{ij}=x_i+x_j$ if and only if $V_i$ and $V_j$ belong to the subgraph $G$.  Two metric spaces $(X_n,d)$ and $(X_n,d')$
are equivalent if the metrics $d$ and $d'$ give the same cell decomposition.
In preliminary work it has been shown that the Gromov product structure determines part of the cell decomposition, 
hence two metrics that belong the same class in the sense of giving the same cell decomposition, have the same Gromov product structure.



In this work we define  an $n\times n$ matrix   that carries the information of the Gromov product structure  and we obtain certain invariants of the Gromov product structures, hence of the
metric classes using this matrix representation.
We recall that two matrices $A$ and $B$ are called isospectral, if their spectrum, i.e, the set of eigenvalues are the same.
Two matrics are called similar, if there is an invertible matrix $P$ such that $B=PAP^{-1}$. If $A$
and $B$ are matrix representations of equivalent Gromov product structures, then the matrix $P$ is a permutation matrix.
Thus, the matrices of inequivalent Gromov product structures may be isospectral, even similar .

Given an undirected graph $G$, with vertices $P_i$, $i=1,\dots n$, $N$ edges and $E_{ij}$ joining $P_i$ to $P_j$, there are two basic matrix representations. One is the  $n\times n$, ``adjacency  matrix"
$A_G$, defined by $A_{ij}=1$ if there is an edge connecting $P_i$ to $P_j$, and zero otherwise.  The second is the ``incidence matrix",   an $n\times N$  matrix $B_G$, whose columns are labeled by the edges and $B_{ij}=1$ if the vertex $P_i$ is connected to the edge $j$ and zero otherwise. The adjacency matrix of a simple graph is a symmetric $n\times n$ matrix. Hence, it has real eigenvalues and a complete set of eigenvectors.  Two graphs $G$ and $G'$ are called ``isospectral" if they have the same set of eigenvalues and they are called ``equivalent"  if they can be obtained from  each other by a permutation of indices. Thus, isospectral graphs are equivalent, if the change of basis matrix of their adjacency matrices is a permutation matrix.
We will use the same characterization for the matrix representation of Gromov product structures, defined below.

\vskip 0.2cm
\noindent
{\bf Definition.}
Let
$S=\{\Delta_{1a_1b_1},\Delta_{2a_2b_2},\dots,\Delta_{na_nb_n}\}$
be a Gromov product structure for a finite metric space with $n$ elements.
The matrix representation for $S$ is the matrix $G_S$, defined by
$G_S(i,j)=1$, if the $j=a_i$ or $j=b_i$, in $\Delta_{ia_ib_i}$, and
$G_S(i,j)=0$ otherwise.
\vskip 0.2cm
Thus, $G_S$ consists of zeros and ones only and it
has exactly two $1$'s in each row.

\vskip 0.2 cm
\noindent {\bf Example: $4$-point spaces. }
For $n=4$,  the Gromov product structure (unique up to permutation of indices) is
$$S=\{\Delta_{124},\Delta_{213},\Delta_{324},\Delta_{413}\}.$$
Thus, the edges $P_1P_3$ and $P_2P_4$ are removed to lead to the pendant-free reduction.
In this case, it can be seen that the adjacency matrix of the pendant-free reduction $A_S$ and the matrix of the Gromov product structure coincide.
$$A_S=G_S=\left(
\begin{array}{cccc}
0 & 1 & 0 & 1 \\
1 & 0 & 1 & 0 \\
0 & 1 & 0 & 1 \\
1 & 0 & 1 & 0 \\
\end{array}
\right)
$$
It is easy to see that the  characteristic polynomial $k(t)$ and the minimal
polynomial $m(t)$ are
$$k(t)=(t-2)(t^2)t^2,\quad\quad m(t)=(t-2)(t^2)t.$$

\vskip 0.2 cm
\noindent {\bf Example: $5$-point spaces. }
For $n=5$, there are $3$ Gromov product structures that correspond to the
metric classes.  These are
\begin{eqnarray*}
	S_1=\{\Delta_{125},\Delta_{213},\Delta_{324},\Delta_{435},\Delta_{514}\},\quad
	S_2=\{\Delta_{125},\Delta_{213},\Delta_{325},\Delta_{425},\Delta_{514}\},\quad
	S_3=\{\Delta_{124},\Delta_{213},\Delta_{324},\Delta_{413},\Delta_{513}\}.
\end{eqnarray*}
The adjacency matrices for the pendant free reductions are given below.
$$A_{S_1}=\left(
\begin{array}{ccccc}
0 & 1 & 0 & 0 & 1\\
1 & 0 & 1 & 0 & 0\\
0 & 1 & 0 & 1 & 0\\
0 & 0 & 1 & 0 & 1\\
1 & 0 & 0 & 1 & 0\\
\end{array}        \right),\quad
A_{S_2}=   \left(
\begin{array}{ccccc}
0 & 1 & 0 & 0 & 1\\
1 & 0 & 1 & 1 & 0\\
0 & 1 & 1 & 0 & 1\\
0 & 1 & 1 & 0 & 1\\
1 & 0 & 1 & 1 & 0\\
\end{array}         \right),\quad
A_{S_3}=   \left(
\begin{array}{ccccc}
0 & 1 & 0 & 1 & 1\\
1 & 0 & 1 & 0 & 1\\
0 & 1 & 0 & 1 & 1\\
1 & 0 & 1 & 0 & 1\\
1 & 1 & 1 & 1 & 0\\
\end{array}         \right).
$$
The matrix of the Gromov product structure for $S_1$ coincides with the adjacency matrix of the pendant free reduction, but this is not the case for $S_2$ and $S_3$, as seen below.
$$G_{S_1}=\left(
\begin{array}{ccccc}
0 & 1 & 0 & 0 & 1\\
1 & 0 & 1 & 0 & 0\\
0 & 1 & 0 & 1 & 0\\
0 & 0 & 1 & 0 & 1\\
1 & 0 & 0 & 1 & 0\\
\end{array}        \right),\quad
G_{S_2}=   \left(
\begin{array}{ccccc}
0 & 1 & 0 & 0 & 1\\
1 & 0 & 1 & 0 & 0\\
0 & 1 & 0 & 0 & 1\\
0 & 1 & 0 & 0 & 1\\
1 & 0 & 0 & 1 & 0\\
\end{array}         \right),\quad
G_{S_3}=   \left(
\begin{array}{ccccc}
0 & 1 & 0 & 1 & 0\\
1 & 0 & 1 & 0 & 0\\
0 & 1 & 0 & 1 & 0\\
1 & 0 & 1 & 0 & 0\\
1 & 0 & 1 & 0 & 0\\
\end{array}         \right).$$
Their characteristic and minimal polynomials are given below:
\begin{eqnarray*}
	k_1(t)=(t-2)(t^2+t-1)^2,\quad	k_2(t)=(t-2)(t+2)t^3,\quad 	k_3(t)=(t-2)(t+2)t^3,\\
    m_1(t)=(t-2)(t^2+t-1),\quad      m_2(t)=(t-2)(t+2)t^2,\quad  m_3(t)=(t-2)(t+2)t.
\end{eqnarray*}
\vskip0.2cm

We now prove that if two Gromov product structures are equivalent, then their matrices are similar and the change of basis matrix is the corresponding permutation matrix.

\begin{proposition}
	Let $(X,d)$ and $(X',d')$ be two metric spaces with Gromov product structures
	$S=\{\Delta_{1i_1j_1},\dots ,\Delta_{ni_nj_n}\}$ and $S'=\{\Delta_{1a_1b_1},\dots ,\Delta_{na_nb_n}\}$. Let $G_S$  and $G_{S'}$ be the  matrices defined by
	$G_S(i,j)=1$, if the $j=a_i$ or $j=b_i$, in $\Delta_{ia_ib_i}$, and
	$G_S(i,j)=0$ otherwise.
	Assume that  there is a permutation matrix $P$ that such $S$ is mapped to $S'$. Then  $G_S P=P G_{S'}$.
\end{proposition}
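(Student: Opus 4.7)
The proof will be a direct unpacking of definitions, using the fact that conjugation of a $0$--$1$ matrix by a permutation matrix is the same as simultaneous relabeling of its rows and columns.

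My first step would be to fix the permutation $\sigma\in S_n$ underlying $P$, adopting the convention $P_{ij}=\delta_{j,\sigma(i)}$, so that right multiplication by $P$ permutes columns and left multiplication by $P$ permutes rows. Under this convention, the hypothesis that $P$ maps $S$ to $S'$ becomes the statement that each $\Delta_{i\,a_ib_i}\in S$ is sent to $\Delta_{\sigma(i)\,\sigma(a_i)\sigma(b_i)}\in S'$, so that the unordered pair attached to vertex $\sigma(i)$ in $S'$ equals $\{\sigma(a_i),\sigma(b_i)\}$. Chasing definitions, $G_{S'}(\sigma(i),k)=1$ iff $k\in\{\sigma(a_i),\sigma(b_i)\}$ iff $\sigma^{-1}(k)\in\{a_i,b_i\}$ iff $G_S(i,\sigma^{-1}(k))=1$, whence
\[
G_{S'}\bigl(\sigma(i),k\bigr)\;=\;G_S\bigl(i,\sigma^{-1}(k)\bigr)\qquad\text{for all }i,k.
\]

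Finally I would read off the matrix identity entry by entry: with the chosen convention one has $(PG_{S'})(i,k)=\sum_j P_{ij}G_{S'}(j,k)=G_{S'}(\sigma(i),k)$ and $(G_SP)(i,k)=\sum_j G_S(i,j)P_{jk}=G_S(i,\sigma^{-1}(k))$, so the entry-wise identity above is exactly $G_S P=P G_{S'}$. The only subtlety is bookkeeping: swapping the role of $\sigma$ and $\sigma^{-1}$ (equivalently, replacing $P$ by $P^{\top}$) interchanges the two sides of the intermediate identity but leaves the final similarity relation intact. Apart from this convention-fixing, the argument is essentially automatic, since the proposition just says that conjugation by the permutation matrix $P$ implements the simultaneous row/column relabeling induced by $\sigma$.
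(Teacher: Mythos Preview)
Your proof is correct and follows essentially the same approach as the paper: both verify the identity $G_S P = P G_{S'}$ entry by entry, using that the permutation sends the minimal Gromov product $\Delta_{i,a_i,b_i}$ to $\Delta_{\sigma(i),\sigma(a_i),\sigma(b_i)}$ and then comparing the nonzero entries on each side. Your use of the explicit permutation $\sigma$ (rather than naming the image indices $s,p,q$ as the paper does) makes the bookkeeping a bit cleaner, but the underlying argument is identical.
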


\begin{proof}
	Assume that $\Delta_{iab}$ is minimal at $P_i$, and the $\{i,a,b\}$ is mapped to $\{s,p,q\}$ under the permutation $P$. Then
	$P_{i,s}=P_{a,p}=P_{b,q}=1$ and these are the only nonzero entries in the rows $i$, $a$ and $b$ respectively.
	In component form, the matrix equation $G_S P=P G_{S'}$ is written as
	$$\sum_k (G_S)_{ik} P_{kj}=\sum_l P_{il} (G_{S'})_{lj}.$$
	Since  $\Delta_{i,a,b}$ is minimal at $P_i$, the first sum contains only $2$ terms. Furthermore, since $i$ is mapped to $s$ the second sum has a single nonzero term. Thus we have
	$$(G_S)_{ia} P_{aj}+(G_S)_{ib} P_{bj}= P_{is} (G_{S'})_{sj}.$$
	The left hand side is nonzero only for $j=a$ or $j=b$.  In either case, the left hand side is also nonzero, since $(G_{S'})_{sp}=(G_{S'})_{sq}=1$, and both sides are zero otherwise.  It follows that $G_S P=P G_{S'}$.
\end{proof}

\section {Matrix invariants}

In this section we  relate the invariants of the matrix representation to certain properties of the Gromov product structures. The proofs are straightforward and they are omitted. 

\begin{proposition} 
The rank of $G$ is equal to the number of edges removed in the pendant-free reduction.
\end{proposition}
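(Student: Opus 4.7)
The plan is to compute $\mathrm{rank}(G_S)$ by analyzing its column kernel and then identify the resulting expression with the number of distinct removed edges.

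Each row of $G_S$ is the $0/1$ indicator of the pair $\{a_i,b_i\}$, i.e.\ the edge removed at $P_i$ in the pendant-free reduction, so two rows coincide iff the same edge is removed at two vertices, and the number of distinct rows of $G_S$ equals the number $|E|$ of distinct removed edges. A vector $v\in\mathbb{R}^n$ lies in $\ker G_S$ iff $v_{a_i}+v_{b_i}=0$ for every $i$, i.e.\ iff $v$ assigns opposite values to the endpoints of every removed edge. Such a vector is precisely a signed $\pm$-coloring (up to an overall scalar on each component) of the graph $H$ on $\{1,\dots,n\}$ whose edges are the removed ones. Such colorings exist only on bipartite components, with one free scalar per component (isolated vertices counted as bipartite). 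Hence $\dim\ker G_S=b$, where $b$ is the number of bipartite components of $H$, and by rank--nullity $\mathrm{rank}(G_S)=n-b$.

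It remains to show $n-b=|E|$. Written componentwise as $|V_C|-b_C=|E_C|$, this forces each connected component $C$ of $H$ to be either a tree (bipartite, $|E_C|=|V_C|-1$) or a unicyclic graph whose unique cycle is odd (non-bipartite, $|E_C|=|V_C|$); equivalently, $H$ contains no even cycle. This structural property of $H$ is the main obstacle of the proof: I would establish it by a short combinatorial argument exploiting the fact that each $P_i$ contributes exactly one minimal edge, together with the saturated triangle equalities $d_{a_ib_i}=d_{ia_i}+d_{ib_i}$ implicit in minimality. An even cycle of removed edges would translate these equalities around the cycle into a pair of competing shortest-path decompositions at one of the involved vertices, contradicting the $\Delta$-genericity assumption that each $P_i$ has a \emph{unique} smallest Gromov product. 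Once this is in hand, $\mathrm{rank}(G_S)=n-b=|E|$, as claimed.

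Finally I would sanity-check on the $n=4,5$ examples already computed in the paper: the removed-edge subgraphs are, respectively, a $4$-cycle ($C_4$) that in fact consists of two parallel edges $\{P_1P_3,P_2P_4\}$, a $5$-cycle, a tree plus an isolated edge, and two disjoint edges with an isolated vertex. In every case the values of $|E|$ match the ranks read off from the characteristic polynomials ($2,5,3,2$), consistent with the formula.
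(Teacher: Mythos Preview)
The paper itself omits this proof, declaring it ``straightforward'', so there is nothing concrete to compare against. Your reduction is sound: the rows of $G_S$ are exactly the edge-indicator vectors $e_{a_i}+e_{b_i}$, the right kernel consists of the vectors that alternate in sign along every removed edge, and rank--nullity yields $\mathrm{rank}(G_S)=n-b$ with $b$ the number of bipartite components of the removed-edge graph $H$. You then correctly isolate the crux: one must show $n-b=|E|$, i.e.\ that every component of $H$ is a tree or an odd unicyclic graph. But you do not prove this. Your sketch rests on ``saturated triangle equalities $d_{a_ib_i}=d_{ia_i}+d_{ib_i}$ implicit in minimality'', yet for a $\Delta$-generic metric the minimal Gromov product is merely the smallest, not zero, so no such equalities hold and the ``competing shortest-path decompositions'' you invoke never materialise; a limiting argument sending the minima to zero would simultaneously destroy the uniqueness hypothesis you need for the contradiction. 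You also speak only of forbidding even cycles, whereas the equality $n-b=|E|$ equally requires excluding components carrying two or more odd cycles. In short, you have located the obstacle precisely but not removed it; whatever argument the authors had in mind as ``straightforward'' is not supplied here either.

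Two smaller slips in your sanity check: rank cannot in general be read off from the characteristic polynomial, since e.g.\ $G_{S_2}$ has eigenvalue $0$ with algebraic multiplicity~$3$ but (as the factor $t^2$ in its minimal polynomial shows) is not diagonalisable, and its rank is $3$, not $n-3=2$; and the $n=4$ removed-edge graph is not ``a $4$-cycle $C_4$'' but simply the two disjoint edges $\{P_1P_3\}$ and $\{P_2P_4\}$.
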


The structure of the removed set as a graph also carries information, hence adjacency matrices of the removed set can be considered for constructing invariants, but these are not considered here. 
\vskip 0.3cm
Recall that two matrices are called isospectral, if they have the same eigenvalues.  The characteristic polynomial of an $n\times n$ matrix $A$ is defined as
$$K_A(t)=det(tI-A)=\sum_{k=1}^n c_k t^k,$$
where  $I$ is the  identity matrix.  It is well known that the coefficients $c_k$'s are related to the traces of the powers of $A$,
through the method of Leverrier or the method of Faddeev,  \cite{Gantmacher}. Although it would be possible to compute the eigenvalues, it is preferable to work
with the traces of the powers of $G_S$, that are integers. 

\begin{proposition} 
	Gromov products structures with the same $K$
	$$K=\{trace(G),\quad  trace(G^2),\dots , trace(G^n)\}$$
	are isospectral.
\end{proposition}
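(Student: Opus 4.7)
The plan is to reduce the statement to the classical fact that, in characteristic zero, the first $n$ power sums of a collection of $n$ numbers determine their elementary symmetric polynomials (and hence the numbers themselves, as an unordered multiset). Concretely, I would observe that for any $n\times n$ matrix $G$ with eigenvalues $\lambda_1,\dots,\lambda_n$ (over $\mathbb{C}$, counted with algebraic multiplicity) one has
\[
\operatorname{trace}(G^k)=\sum_{i=1}^n \lambda_i^k=:p_k,\qquad k=1,\dots,n.
\]
Thus the hypothesis that two Gromov product matrices $G$ and $G'$ share the same tuple $K$ is precisely the statement that their eigenvalue multisets have the same first $n$ power sums.

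Next I would invoke Newton's identities, which recursively express the elementary symmetric polynomials $e_k(\lambda_1,\dots,\lambda_n)$ in terms of $p_1,\dots,p_k$:
\[
k\,e_k = \sum_{j=1}^{k}(-1)^{j-1} e_{k-j}\,p_j,\qquad 1\le k\le n,
\]
with the convention $e_0=1$. Since we are working over the rationals (indeed the integers, because $G$ has integer entries), the factors $k$ are invertible and the recursion determines $e_1,\dots,e_n$ uniquely from $p_1,\dots,p_n$. Consequently the coefficients of the characteristic polynomial, which are (up to sign) exactly $e_1,\dots,e_n$, coincide for $G$ and $G'$.

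Having identical characteristic polynomials, $G$ and $G'$ share the same roots with the same algebraic multiplicities, i.e.\ the same spectrum, which is the definition of isospectrality recalled just before the proposition. I would close by remarking that the converse is immediate, since $\operatorname{trace}(G^k)$ depends only on the eigenvalues. There is no real obstacle here; the only point deserving a sentence of care is the invertibility of $k$ in Newton's identities, which is automatic in our setting, and the fact that one truly needs all $n$ traces (not fewer), which is confirmed by the recursion. Because the paper states the proof is to be omitted, I would present the argument in two or three lines citing Newton's identities (or, equivalently, the Leverrier--Faddeev recursion already referenced in the paper) and conclude.
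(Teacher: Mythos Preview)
Your proposal is correct and matches the approach the paper itself indicates: the paper omits the proof as straightforward but explicitly notes, just before the proposition, that the coefficients of the characteristic polynomial are related to the traces of the powers via the Leverrier/Faddeev method, which is exactly the Newton-identity recursion you invoke. There is nothing to add.
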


Isospectral matrices can further be decomposed into finer equivalence classes defined  in terms of their minimal polynomials.
For $n=5$ and $n=6$ we have given the characteristic and minimal polynomials for each class of Gromov product structure, but in general, the
determination of the minimal polynomial is not practical and it is not attempted as a classification tool for $n=7$ and $n=8 $
\vskip 0.3cm
In order to define the last two invariants, we need to define ``chains" and ``cycles" of Gromov products.
If the set of minimal Gromov products at $P_{i_1},P_{i_2},\dots P_{i_k}$ are
$$\{\Delta_{i_1ai_2},\Delta_{i_2i_1i_3},\dots \Delta_{i_ki_{k-1}b}\}$$
then they are said to form a ``chain" of length $k$.  If $a=b$ they are said to form a ``cycle" of length $k$.
The set of minimal Gromov products is decomposed to a disjoint union of chains and cycles.
A chain is said to have $2$ ``ends", while a cycle has no ``ends".
\vskip 0.3cm 
The  Gromov product structure $S$ of an  $n$-point space is said to be ``reducible", if there is a $k$-element  subset of the index set such that
the restriction of $S$ to this subset is a Gromov product structure on a $k$-point space.
The reducibility and irreducibility of a Gromov product structure is defined in terms of its matrix as follows.

\begin{proposition}\label{reducible} If $S$ is an irreducible Gromov product structure, then the matrix $P_S$ defined by 
	$$P_S=I+G+G^2+\dots G^n$$
is irreducible. 	
\end{proposition}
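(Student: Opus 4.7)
The plan is to use the standard correspondence between irreducibility of a nonnegative matrix and strong connectivity of its associated directed graph, and then to translate the combinatorial notion of reducibility of a Gromov product structure into exactly this graph-theoretic condition. I would argue the contrapositive: if $P_S$ is reducible, then $S$ is reducible.

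First I would set up the directed graph $D_G$ on vertices $\{1,\dots,n\}$ with a directed edge $i\to j$ whenever $(G_S)_{ij}=1$. By definition of $G_S$, each vertex $i$ has out-degree exactly two, pointing to the indices $a_i,b_i$ appearing in the minimal Gromov product $\Delta_{ia_ib_i}$ at $P_i$. The entry $(G_S^k)_{ij}$ counts directed walks of length $k$ from $i$ to $j$ in $D_G$, and since $D_G$ has $n$ vertices, there is a directed path from $i$ to $j$ if and only if such a walk of some length $\le n$ exists. Hence $(P_S)_{ij}>0$ if and only if either $i=j$ or $j$ is reachable from $i$ in $D_G$. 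By the classical characterization, $P_S$ is irreducible as a nonnegative matrix if and only if the digraph underlying $P_S$ is strongly connected, which here amounts to saying every vertex of $D_G$ can be reached from every other vertex.

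Now suppose $P_S$ is reducible. Then strong connectivity fails in $D_G$, so there exist indices $i_0,j_0$ such that $j_0$ is not reachable from $i_0$. Define $T$ to be the set of all vertices reachable from $i_0$ along directed edges of $D_G$, including $i_0$ itself. Then $T$ is nonempty (contains $i_0$) and proper (omits $j_0$), and by construction $T$ is closed under outgoing edges: for every $v\in T$, both neighbors $a_v,b_v$ of $v$ in $D_G$ lie in $T$. In particular $|T|\ge 3$, because each vertex of $T$ has two distinct out-neighbors in $T$ different from itself. The restriction $S|_T=\{\Delta_{va_vb_v}\mid v\in T\}$ is therefore a well-defined Gromov product structure on the $|T|$-point subset indexed by $T$, which contradicts the assumed irreducibility of $S$. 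This proves the proposition.

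The main substantive step is the equivalence between \emph{combinatorial} reducibility of $S$ (existence of a proper index subset whose Gromov products stay inside that subset) and \emph{graph-theoretic} closure under the two-out-neighbor map on $D_G$; once this bridge is made, the rest reduces to the standard Perron--Frobenius-style dictionary between nonnegative matrices and strong connectivity, plus the elementary observation that reachable sets are automatically closed under out-edges. I do not anticipate a genuine obstacle, only the need to state the matrix-to-digraph correspondence cleanly and to verify that a closed set $T$ of out-degree-two vertices automatically has $|T|\ge 3$ so that $S|_T$ is a bona fide Gromov product structure on at least three points.
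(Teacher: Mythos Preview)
The paper explicitly omits the proof of this proposition (it states at the beginning of Section~3 that ``the proofs are straightforward and they are omitted''), so there is no paper proof to compare against. Your contrapositive argument is correct and is exactly the natural route the authors presumably have in mind: translate irreducibility of $P_S$ into strong connectivity of the digraph $D_G$ of $G_S$ via the standard reachability interpretation of $I+G+\cdots+G^n$, and then observe that a failure of strong connectivity produces a nonempty proper out-closed vertex set $T$ on which the minimal Gromov products $\{\Delta_{v a_v b_v}:v\in T\}$ restrict, witnessing reducibility of $S$. Your check that $|T|\ge 3$ (since $i_0,a_{i_0},b_{i_0}$ are three distinct elements of $T$) is necessary and correct, and properness of $T$ follows from $j_0\notin T$.
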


The number of ``ends" is an invariant of the Gromov product structure.  It is related to the matrix representation as follws. 

\begin{proposition}\label{reducible} Let
	$B=(G+G^t)$. The symmetric and antisymmetric parts of $G$ are given respectively by  $C=floor(B/2)$ and its   $D=G-C$.
	Then, the total number of $1$'s in $D$ is equal to  the number of ``ends"  of the Gromov product chains.
\end{proposition}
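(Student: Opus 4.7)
The plan is to read $C$ and $D$ entrywise, translate the presence of a $1$ in $D$ into a combinatorial condition on the Gromov product structure, and then count using the chain/cycle decomposition. Because $G$ has $0/1$ entries, the symmetric matrix $B = G + G^t$ has entries in $\{0,1,2\}$, with $B_{ij} = 2$ precisely when both $G_{ij} = 1$ and $G_{ji} = 1$. Hence $C_{ij} = \lfloor B_{ij}/2 \rfloor$ is the indicator that both $(i,j)$ and $(j,i)$ are edges of $G$, and $D_{ij} = G_{ij} - C_{ij}$ equals $1$ iff $G_{ij} = 1$ but $G_{ji} = 0$, i.e.\ iff $j$ appears in the minimal Gromov product at $P_i$ but $i$ does not appear in the minimal Gromov product at $P_j$. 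I will call such an ordered pair non-mutual; the claim then becomes that the number of non-mutual pairs equals the number of chain ends.

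To connect this to chains and cycles I would view the Gromov product structure as a directed graph $\vec{G}$ in which every vertex $i$ has exactly the two out-edges $i\to a_i$ and $i\to b_i$, and let $M$ be the undirected graph formed by pairs $\{i,j\}$ for which both $i\to j$ and $j\to i$ are present. Since each vertex has out-degree $2$ in $\vec{G}$, every vertex has $M$-degree at most $2$, so $M$ decomposes into a disjoint union of paths and cycles. I would then verify that the connected components of $M$ are precisely the chains and cycles of the Gromov product structure: at an interior vertex $P_{i_\ell}$ of a chain $\Delta_{i_\ell i_{\ell-1} i_{\ell+1}}$, both Gromov-neighbours are reciprocated and $i_\ell$ has $M$-degree $2$; at a chain end $P_{i_1}$ with $\Delta_{i_1 a i_2}$, the edge to $i_2$ is mutual, but the edge to $a$ cannot be, since otherwise $a$ would also have $i_1$ in its minimum and the chain could be extended by prepending $a$, contradicting that $i_1$ is an end. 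Cycles contribute only mutual edges.

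Counting vertex by vertex then finishes the proof: a vertex of $M$-degree $2$ contributes no non-mutual out-edge, a vertex of $M$-degree $1$ contributes exactly one, and a vertex of $M$-degree $0$ (which corresponds to a one-point ``degenerate chain'' whose two external indices give two ends by the paper's convention) contributes two. In every case the number of non-mutual out-edges incident at a vertex equals the number of chain ends located at that vertex, so summing gives that the total number of ones in $D$ equals the total number of chain ends. The main technical point to be careful about is the identification of $M$-components with chains and cycles: this rests on the maximality argument above, namely that the ``external'' index of an end-Gromov-product is necessarily non-mutual, together with the single-vertex convention for the degenerate case. Once this identification is in place, the counting of $1$'s in $D$ is immediate.
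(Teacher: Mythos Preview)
The paper does not prove this proposition: at the start of Section~3 it states that ``the proofs are straightforward and they are omitted.'' Your argument supplies exactly the kind of straightforward proof one would expect here, and it is correct. The entrywise reading of $C$ and $D$ is right, and the identification of the chain/cycle decomposition with the connected components of your mutual graph $M$ is the natural way to make the count precise; the maximality observation (that the external index at a chain end is necessarily non-mutual, else the chain would extend) is the only non-automatic point and you have handled it. Your treatment of the length-one case is also consistent with the paper's convention that every chain contributes two ends, as one can verify against the $N_e$ column in Tables~1 and~2.
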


\section{Equivalence Class Decomposition of $6$-Point  Metric Spaces via Gromov Product Structures}

The classification of $6$-point metric spaces with respect to the metric fan decomposition and a list of the corresponding types
are given in \cite{Sturm} where it is shown that there are $339$
combinatorial types. By a straightforward
application of Proposition 1 and Corollary 1 of \cite{BCK2017}, we have obtained $32$ allowable Gromov product structures.  Among these we eliminated the ones that are not generic and
by comparing these with the list given in \cite{Sturm}, we've identified the
decomposition of the combinatorial types into Gromov product structures.

We collect the isospectral structures and give their minimal polynomials in Tables 1 and 2, respectively for reducible and irreducible structures. All characteristic and minimal polynomials have a common factor $(t-2)$. 
The number of removed edges and the number of  ``ends" in the collection of Gromov products chains and cycles are also given in these tables

\section{Concluding remarks}
The representation of Gromov Product Structures  by a square matrix has a number of advantages. Finding the equivalence of metric structures under the permutation of the vertices is a basic and difficult problem. As there is a one-to-one correspondence between the Gromov Product Structure and its matrix representation, the equivalence problem is reduced to checking whether the corresponding matrices are similar via a permutation matrix.

We recall that isospectral matrices are characterized by the equality of their characteristic polynomials, whose coefficients are integers.  Thus, the characteristic polynomial of the representation matrix gives rise to coarser decomposition into isospectral matrices.  It is then much simpler to search for those that are similar via a permutation matrix.

In addition to the characteristic polynomial, matrix invariants such rank, reducibility or irreducibility allow to give rigorous definitions for various characteristics of metric spaces and simplifies further the solution of the equivalence problem.


The matrix representation reduce the computational complexity of the equivalence problem as follows.
For an $N$ point space, the order of the permutation group is $N!$.   If there are $M$ objects to check for equivalence,  one has to apply  $N!$  elements of the permutation group to  each pair  $M(M-1)/2$. If the number of objects to compare for equivalence is reduced to $M/n$, then the number of comparisons is reduced by $n^2$.
The solution of the equivalence problem for $7$ point spaces became possible with the help of the improvement due to spectral decomposition.

\bibliography{mybibfile}

\begin{thebibliography}{00}
	\bibitem{Sturm} Sturmfels, B., J. Yu, Classification of
	Six-point Metrics, The Electronic Journal of Combinatorics, 11, (2004), R44.
	\bibitem{BCK2017} Bilge, A. H., D. Celik and S. Kocak, An Equivalence Class Decomposition of Finite Metric Spaces via Gromov Products, Discrete Mathematics, Vol. 340, (2017), 1928-1932.
	\bibitem{IB_algorithm} Incegul, M., Algorithms for the Equivalence Class Decomposition of Finite Metric Spaces via Gromov Products, Unpublished manuscript (2019).
	\bibitem{Koolen} Koolen, J., A. Lesser, V. Moulton, Optimal
	realizations of generic five-point metrics, European J. of
	Combin., 30,(2009) 1164-1171.


\bibitem{Gantmacher} Gantmacher, F.R., The Theory of Matrices, Chelsea Publishing Company,Vol. 1, (1977).


%
\bibitem{VJ} Vaisala, J., ``Gromov hyperbolic spaces" http://www.helsinki.fi/~jvaisala/grobok.pdf, (2004)
%
\end{thebibliography}


\begin{table}[ht]
	\caption{Minimal Gromov products at node $P_i$: Reducible structures: Characteristic and  minimal polynomials $K(t)$ and $m(t)$, the number of removed edges $N_r$, the number of ``ends" $N_e$ } 
	\centering 
	\begin{tabular}{|c| c  |l|l|c|c| }
		\hline
		{\small }&{\small Gromov Product Structure} & {\small -k(t)/(t-2)}&{\small -m(t)/(t-2)}& {\small $N_r$} & {\small $N_e$} \\ \hline
		$R_1$&$\Delta_{124},\Delta_{213},\Delta_{324},\Delta_{413},\Delta_{513},\Delta_{613}$&$(t+2)t^4$&$(t+2)t^4$ &2&  4  		             \\\hline
		$R_2$&$\Delta_{124},\Delta_{213},\Delta_{324},\Delta_{435},\Delta_{524},\Delta_{624}$&$(t+2)t^4$&$(t+2)t^4$	&3&	 4              \\\hline
		$R_3$&$\Delta_{124},\Delta_{213},\Delta_{324},\Delta_{413},\Delta_{513},\Delta_{624}$&$(t+2)t^4$&$(t+2)t^4$	&2&  4                 \\\hline
		$R_4$&$\Delta_{124},\Delta_{213},\Delta_{324},\Delta_{435},\Delta_{524},\Delta_{613}$&$(t+2)t^4$&$(t+2)t^4$	&3&	 4                 \\\hline
		$R_5$&$\Delta_{124},\Delta_{213},\Delta_{324},\Delta_{413},\Delta_{513},\Delta_{625}$&$(t+2)t^4$&$(t+2)t^4$	&3&  4                 	\\\hline
		$R_6$&$\Delta_{124},\Delta_{213},\Delta_{324},\Delta_{435},\Delta_{524},\Delta_{615}$&$(t+2)t^4$&$(t+2)t^4$	&4&  4                	\\\hline
		$R_7$&$\Delta_{124},\Delta_{213},\Delta_{324},\Delta_{413},\Delta_{516},\Delta_{625}$&$(t+2)(t-1)(t+1)t^2$&$(t+2)(t-1)(t+1)t^2$ &4&2      		\\\hline
		$R_8$&$\Delta_{125},\Delta_{213},\Delta_{324},\Delta_{435},\Delta_{514},\Delta_{613}$&$(t^2+t-1)^2t   	$&$(t^2+t-1)^2t$	     &4&2       \\\hline
		$R_9$&$\Delta_{124},\Delta_{213},\Delta_{324},\Delta_{413},\Delta_{516},\Delta_{635}$&$(t+2)(t-1)(t+1)t^2$&$(t+2)(t-1)(t+1)t^2$	  &4&2        	\\\hline
	\end{tabular}
\end{table}

\begin{table}[h]
	\caption{Minimal Gromov products at node $P_i$: Irreducible structures: Characteristic and  minimal polynomials $K(t)$ and $m(t)$, the number of removed edges $N_r$, the number of ``ends" $N_e$  } 
	\centering 
	\begin{tabular}{|c| c  |l|l|c|c| }
		\hline
		{\small } &{\small Gromov Product Structure} &		{\small -k(t)/(t-2)}&{\small -m(t)/(t-2)}& {\small $N_r$} & {\small $N_e$}\\ \hline
		$I_1$&$   \Delta_{124},\Delta_{213},\Delta_{324},\Delta_{456},\Delta_{524},\Delta_{624}$&$(t+2)t^4    		  $&$(t+2)t^4    		 $  &3&4  \\\hline	
		$I_2$&$   \Delta_{124},\Delta_{213},\Delta_{324},\Delta_{456},\Delta_{513},\Delta_{624}$&$(t+1)^2 t^3        $&$(t+1)^2*t^3          $  &3&6       \\\hline 
		$I_3$&$   \Delta_{124},\Delta_{213},\Delta_{324},\Delta_{435},\Delta_{516},\Delta_{624}$&$(t+1)^2 t^3		  $&$(t+1)^2*t^3		 $  &4&6      \\\hline 
		$I_4$&$   \Delta_{124},\Delta_{213},\Delta_{324},\Delta_{456},\Delta_{513},\Delta_{613}$&$(t^2+2 t+2)t^3     $&$(t^2+2*t+2)t^3       $  &3&8      \\\hline 
		$I_5$&$   \Delta_{124},\Delta_{213},\Delta_{324},\Delta_{435},\Delta_{526},\Delta_{635}$&$(t+2)t^4  		  $&$(t+2)t^4  		     $  &4&4        \\\hline 
		$I_6$&$   \Delta_{125},\Delta_{213},\Delta_{324},\Delta_{436},\Delta_{536},\Delta_{625}$&$(t+2)t^4  		  $&$(t+2)t^4  		     $  &4&4        \\\hline 
		$I_7$&$   \Delta_{124},\Delta_{213},\Delta_{324},\Delta_{435},\Delta_{546},\Delta_{635}$&$(t+2)(t-1)(t+1)t^2 $&$(t+2)(t-1)(t+1)t^2   $  &4&2          \\\hline 
		$I_8$&$   \Delta_{124},\Delta_{213},\Delta_{324},\Delta_{435},\Delta_{546},\Delta_{615}$&$(t+2)(t-1)(t+1)t^2 $&$(t+2)(t-1)(t+1)t^2   $  &5&2      \\\hline 
		$I_9$&$   \Delta_{124},\Delta_{213},\Delta_{324},\Delta_{435},\Delta_{516},\Delta_{635}$&$(t+1)(t^2+t-1)t^2  $&$(t+1)(t^2+t-1)t^2    $&4&4\\\hline 
		$I_{10}$&$\Delta_{124},\Delta_{213},\Delta_{324},\Delta_{435},\Delta_{516},\Delta_{625}$&$(t^4+2t^3-t+1)t	  $&$(t^4+2t^3-t+1)t	 $&5&4\\\hline 
		$I_{11}$&$\Delta_{125},\Delta_{213},\Delta_{324},\Delta_{435},\Delta_{546},\Delta_{613}$&$(t+1)(t^2+t-1)t^2  $&$(t+1)(t^2+t-1)t^2    $&5&4\\\hline 
		$I_{12}$&$\Delta_{124},\Delta_{213},\Delta_{324},\Delta_{435},\Delta_{526},\Delta_{615}$&$(t+2)t^4		      $&$(t+2)t^4		     $&5&4\\\hline 
		$I_{13}$&$\Delta_{124},\Delta_{213},\Delta_{324},\Delta_{435},\Delta_{546},\Delta_{613}$&$(t+2)t^4		      $&$(t+2)t^4		     $&4&4\\\hline 
		$I_{14}$&$\Delta_{124},\Delta_{213},\Delta_{324},\Delta_{435},\Delta_{546},\Delta_{625}$&$(t^2+t-1)^2t	      $&$(t^2+t-1)^2t	     $&5&2\\\hline 
		$I_{15}$&$\Delta_{125},\Delta_{213},\Delta_{324},\Delta_{435},\Delta_{546},\Delta_{625}$&$(t^2+t-1)^2t	      $&$(t^2+t-1)^2t	     $&5&2\\\hline
		$I_{16}$&$\Delta_{156},\Delta_{213},\Delta_{324},\Delta_{456},\Delta_{513},\Delta_{624}$&$(t+1)^2t^3	      $&$(t+1)^2t^3	         $&3&6\\\hline
		$I_{17}$&$\Delta_{126},\Delta_{213},\Delta_{324},\Delta_{435},\Delta_{546},\Delta_{615}$&$(t+2)(t-1)^2(t+1)^2$&$(t+2)(t-1)^2(t+1)^2  $&6&0\\\hline
	\end{tabular}
	\label{table:nonlin} 
\end{table}

\end{document}